\newtheorem{theorem}{Theorem}[section]
\newtheorem{lemma}[theorem]{Lemma}
\theoremstyle{definition}}
\theoremstyle{definition}
\numberwithin{equation}{section}
\numberwithin{figure}{section}
\title{Excursions in Sylvester-Gallai land}
\author{Imre B\'ar\'any$^{1,2,3}$  \ \ Julia Q. Du$^{1,4,5}$ \ \ \ Dan Schwarz$^\dagger$\\
Liping Yuan\footnote{Corresponding author. Email:  barany.imre@renyi.hu (I. B\'ar\'any); qddu@hebtu.edu.cn (J. Q. Du); lpyuan@hebtu.edu.cn (L. Yuan); tuzamfirescu@gmail.com (T. Zamfirescu)} $^{1,4,5}$ \ \ \ Tudor Zamfirescu$^{1,6,7}$}
\date{\today}
\begin{document}
\large

\maketitle
{\small 1. School of Mathematical Sciences,
Hebei Normal University,
050024 Shijiazhuang, P.R. China.

2. Alfr\'ed R\'enyi Institute of Mathematics, HUN-REN
13 Re\'altanoda Street, Budapest 1053, Hungary.

3. Department of Mathematics, University College London, London, UK.

4. Hebei Key Laboratory of Computational Mathematics and Applications, 050024 Shijiazhuang,  P.R. China.

5. Hebei Research Center of the Basic Discipline Pure Mathematics,
050024 Shijiazhuang, P.R. China.

6. Fachbereich Mathematik, Technische Universit\"at Dortmund,
44221 Dortmund, Germany.

7. Roumanian Academy,  014700 Bucharest, Roumania.
}

%\begin{abstract}
\textbf{Abstract}: The Sylvester-Gallai theorem states that for a finite set of points in the plane, if every line determined by any two of these points also contains a third, then the set is necessarily made of
collinear points. In this paper, we first provide a counterexample in the plane when the point set is countably infinite but bounded. Then we consider a variant of the Sylvester-Gallai theorem where instead of a finite point set we have a finite family of convex sets in $\mathbb{R}^d$ ($d\geq 2$). Finally, we present another variant of the Sylvester-Gallai theorem,  when instead of point sets we have a finite family of line-segments in the plane.

\textbf{Keywords}: Sylvester-Gallai theorem; collinear points; convex system; ordinary line; line-segments.

\textbf{Mathematics Subject Classification}: 52C35, 51M04.
%\end{abstract}

\section{Introduction and main results}\label{sec:intro)}

In 1893, Sylvester \cite{Syl93} posed the following question:
Let a finite set of points in the plane
have the property that the line through
any two of them passes through
a third point of the set.
Must all the points lie on one
line?
No solution was offered at that time and the problem seemed to have been forgotten. Forty years later it resurfaced as a conjecture by Erd\H{o}s \cite{Erd43}. Gallai \cite{Gal44} gave the first proof. In fact Melchior \cite{Mel41} proved the same result in dual form earlier than Gallai but probably was unaware of Sylvester's question. The Sylvester-Gallai theorem states the following.

\begin{theorem}\label{Syl-thm}
A finite set of points in the plane, such that every line determined
by any two of these points also contains a third,
is necessarily made of collinear points.
\end{theorem}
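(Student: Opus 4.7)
The plan is to argue by contradiction using Kelly's extremal-distance method, which I find to be the cleanest approach among the classical proofs of Theorem~\ref{Syl-thm}. Assume, for contradiction, that the finite point set $S$ is not collinear. Let $\mathcal{L}$ denote the set of lines determined by pairs of points of $S$, and consider the collection of all pairs $(P,\ell)$ with $P \in S$, $\ell \in \mathcal{L}$, and $P \notin \ell$. Since $S$ is not collinear, this collection is non-empty; since $S$ is finite, the collection is finite. Choose a pair $(P_0,\ell_0)$ that \emph{minimizes} the Euclidean distance $\mathrm{dist}(P,\ell)$. This extremal selection is the engine of the argument.

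The next step is to exploit the hypothesis of the theorem, which forces $\ell_0$ to contain at least three points of $S$, say $A_1, A_2, A_3$. Let $Q$ be the foot of the perpendicular from $P_0$ to $\ell_0$. By the pigeonhole principle, at least two of the three points lie on the same closed half-line of $\ell_0$ emanating from $Q$; call them $A$ and $B$ with $A$ between $Q$ and $B$ (allowing the degenerate possibility $A = Q$). The idea now is to produce a competing pair with strictly smaller distance than $\mathrm{dist}(P_0,\ell_0)$, contradicting the minimality of $(P_0,\ell_0)$.

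Specifically, I would consider the line $\ell' := P_0 B$, which belongs to $\mathcal{L}$, and compare it with the point $A$. A short similar-triangles computation (comparing the right triangles $P_0 Q B$ and the perpendicular from $A$ to $\ell'$) shows that
\[
\mathrm{dist}(A,\ell') \;<\; \mathrm{dist}(P_0,\ell_0),
\]
since $A$ is strictly closer than $P_0$ to $B$ along the hypotenuse direction. Provided $A \notin \ell'$, the pair $(A,\ell')$ is admissible and violates minimality, giving the contradiction.

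The main obstacle I anticipate is not the geometric inequality itself, which is a routine similar-triangles argument, but rather the careful handling of degenerate configurations: ensuring that $A \neq P_0$, that $A \notin \ell'$, and that the case $A = Q$ still yields a strictly smaller distance. These are settled by observing that if $A = Q$ then $\mathrm{dist}(A, \ell') = 0 < \mathrm{dist}(P_0, \ell_0)$, and that $A \in \ell'$ would force $A$, $B$, $P_0$ collinear, contradicting $P_0 \notin \ell_0$. Once these edge cases are dispatched, the extremal contradiction closes the proof. As a sanity check, I note that Kelly's proof is genuinely Euclidean --- it fails over the complex numbers, which is consistent with the known fact that the Sylvester-Gallai theorem itself fails in $\mathbb{C}^2$.
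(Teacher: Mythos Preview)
The paper does not give its own proof of Theorem~\ref{Syl-thm}; it merely cites Kelly's argument via Coxeter~\cite{Cox48} and \cite{AZ18}, and your proposal is precisely Kelly's proof, so you are aligned with what the paper invokes.

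One slip in your edge-case analysis: when $A=Q$ it is \emph{not} true that $\mathrm{dist}(A,\ell')=0$. The foot $Q$ does not lie on $\ell'=\overline{P_0B}$, since $Q$ and $B$ are distinct points of $\ell_0$ while $P_0\notin\ell_0$. What is true is that $\mathrm{dist}(Q,\ell')$ equals the altitude from the right-angle vertex in the right triangle $P_0QB$, and this altitude is strictly shorter than the leg $|P_0Q|=\mathrm{dist}(P_0,\ell_0)$. Thus the degenerate case $A=Q$ is already handled by your main similar-triangles inequality and needs no separate disposal; simply delete the erroneous sentence.
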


This theorem admits one of the most beautiful proofs ever (see \cite{AZ18}). It is due to Kelly and appeared in a paper by Coxeter~\cite{Cox48} in 1948.
For infinite sets, the statement is wrong. $\mathbb{R}^2$ is an immediate counterexample; the $\mathbb{Z}^2$ lattice is countably infinite
(still unbounded), and also a counterexample. As any infinite, bounded set of points in the plane has necessarily (at least) one accumulation point,
can we exhibit such a counterexample, minimal in the sense of having exactly one accumulation point?

Theorem \ref{one-limit-p-ex} answers this affirmatively. Without the accumulation point, the set we construct is made of isolated points only;
with the accumulation point, the set becomes compact.

In Sections 3 and 4 we consider a variant of the Sylvester-Gallai theorem where instead of a finite point set we have a finite family of convex sets. Precisely, let $\mathcal F=\{K_1,\ldots,K_n\}$ be a finite family of pairwise disjoint, compact and strictly convex sets in $\mathbb{R}^d$ where each $K_i$ has nonempty interior. We assume $n\ge 2$ and $d\ge 2$, the case $d=1$ is not interesting and so is the case $n=1$. We will call such a family a {\sl convex system}. Define $U:=\bigcup_{i=1}^n K_i$ and call a line $L$ {\sl ordinary} if $L \cap U$ consists of exactly two points, the usual term for such a line. Of course the two points in $L \cap U$ belong to distinct $K_i$. We will show in Theorem~\ref{th:sik} that for $d=2$ there are convex systems with no ordinary line if and only if $n > 3$. We prove further in Theorem~\ref{th:high} that every convex system with $d\ge 3$ and $n\ge 2$ has an ordinary line.

Section \ref{sec:segments} is about another variant of the Sylvester-Gallai theorem, namely when instead of point sets we have a finite family of line-segments in the plane.

\section{ A countably infinite, bounded counterexample}\label{sec:constr}

Line-segments in this paper are topologically closed.
For any $x,y\in\mathbb{R}^d$,
let $xy$ be the line-segment joining $x$ and $y$,
$\overline{xy}$ be the line through $x$ and $y$.
%and
%$[xy\rangle$ be the half-line from $x$ through $y$.
For $M\subset\mathbb{R}^d$, $\overline{M}$ denotes the affine hull of $M$ and $\mathrm{conv}\,{M}$ its convex hull.

In order to prove Theorem \ref{one-limit-p-ex},
we need the following lemma.

\begin{lemma}\label{bxy}
The length $b$ of the angle bisector of
the $2\pi/3$ angle of a triangle,
made by adjacent sides of lengths $x$ and $y$,
is given by the formula
\begin{align*}
\frac{1}{b}=\frac{1}{x}+\frac{1}{y}.
\end{align*}
\end{lemma}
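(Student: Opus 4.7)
The plan is to use the standard area-decomposition trick: the bisector splits the $2\pi/3$ angle at the vertex into two adjacent angles of measure $\pi/3$, producing two sub-triangles sharing the bisector as a common side. Computing the area of the original triangle in two ways — as one triangle, and as the sum of the two sub-triangles — should force the identity out immediately.

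In detail, let $ABC$ be a triangle with the $2\pi/3$ angle at $A$, with $|AB|=x$ and $|AC|=y$, and let $D$ be the foot of the bisector on $BC$, so $|AD|=b$. I would first apply the formula $\tfrac12 pq\sin\theta$ to the whole triangle to get
\[
\operatorname{area}(ABC)=\tfrac12 xy\sin\tfrac{2\pi}{3}.
\]
Then I would apply the same formula to the two triangles $ABD$ and $ACD$, whose angles at $A$ are each $\pi/3$, to get
\[
\operatorname{area}(ABD)+\operatorname{area}(ACD)=\tfrac12 xb\sin\tfrac{\pi}{3}+\tfrac12 yb\sin\tfrac{\pi}{3}.
\]
Since $\sin(2\pi/3)=\sin(\pi/3)$, equating the two expressions and cancelling the common factor yields $xy=(x+y)b$, which, upon dividing by $xyb$, rearranges to the claimed identity $1/b=1/x+1/y$.

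There is essentially no obstacle — the supplementary identity $\sin(2\pi/3)=\sin(\pi/3)$ is precisely what makes $2\pi/3$ the distinguished angle for which this clean harmonic-mean-type relation holds. If one preferred a computational route, one could instead place $A$ at the origin with the two sides along directions $\pm\pi/3$ from the bisector, and compute $b$ by intersecting the bisector line with $BC$; this gives the same answer but is longer. The area argument is the cleanest and I would present it directly.
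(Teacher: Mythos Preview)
Your proposal is correct and is essentially identical to the paper's proof: the paper also computes the area of the triangle two ways via $\tfrac12 xy\sin\tfrac{2\pi}{3}=\tfrac12 xb\sin\tfrac{\pi}{3}+\tfrac12 by\sin\tfrac{\pi}{3}$ and reads off the formula.
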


\begin{proof}
The simple two-way calculation of the area of the triangle
\begin{align*}
\frac{1}{2}xy\sin\frac{2\pi}{3}=\frac{1}{2}xb\sin\frac{\pi}{3}
+\frac{1}{2}by\sin\frac{\pi}{3}
\end{align*}
yields the desired formula.
\end{proof}

A non-collinear, infinite, bounded set of points in $\mathbb{R}^2$ meeting no line in exactly two points is easily found: take a (full) triangle and remove its vertices. But can we find a countable such set?

\begin{theorem}\label{one-limit-p-ex}
There exists a non-collinear, countably infinite, bounded set of points in the plane, meeting no line in exactly two points.
\end{theorem}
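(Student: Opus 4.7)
The plan is to give an explicit construction guided by Lemma~\ref{bxy}. Take $S$ to be the origin $O$ together with points on six rays from $O$ at $60^\circ$ intervals, using two interlocking distance sequences: on the three rays at angles $0^\circ,120^\circ,240^\circ$ place a point at distance $\tfrac{1}{3n+1}$ from $O$ for each $n\ge 0$, and on the three rays at angles $60^\circ,180^\circ,300^\circ$ place a point at distance $\tfrac{1}{3n+2}$ from $O$ for each $n\ge 0$. Then $S$ is countable, bounded (contained in the closed unit disk), non-collinear, compact, and has $O$ as its unique accumulation point; this immediately gives the ``topological'' content of the theorem.

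To verify that no line meets $S$ in exactly two points, I would case-split on the mutual position of two distinct points $P,Q\in S$. If $P,Q$ lie on the same line through $O$, that line is one of the three axes of the configuration and already carries infinitely many points of $S$. If $P,Q$ lie on rays $120^\circ$ apart, both distances come from the same sequence; the triangle $OPQ$ has a $2\pi/3$ angle at $O$, so Lemma~\ref{bxy} gives that the bisector from $O$ meets $PQ$ at distance $b$ with $1/b=1/|OP|+1/|OQ|$. This bisector runs along the third axis, and the identity $(3m+1)+(3n+1)=3(m+n)+2$, together with its dual $(3m+2)+(3n+2)=3(m+n+1)+1$, shows that $b$ lies in the \emph{other} distance sequence, so the bisector endpoint is itself a point of $S$.

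The remaining case, rays $60^\circ$ apart, is the main obstacle and is what forces the two-sequence construction. Here one ray carries each sequence, so $3m+1\neq 3n+2$ forces $|OP|\neq|OQ|$; hence $PQ$ is not parallel to the third axis and meets it in a unique point at distance $1/\bigl|\tfrac{1}{|OP|}-\tfrac{1}{|OQ|}\bigr|$ from $O$, via a short calculation in the spirit of the lemma (essentially the external bisector relation for the supplementary $2\pi/3$ angle). The sign of $|OP|-|OQ|$ determines which of the two half-rays of the third axis contains this intersection, and the residue of $|(3m+1)-(3n+2)|$ modulo $3$ matches the distance sequence carried by that half-ray. The bookkeeping here is the heart of the argument: a naive choice of the single sequence $\{1/n\}$ on all six rays would allow the equal-distance coincidence and thereby produce genuine ordinary lines (those parallel to the third axis), whereas the mod-$3$ split eliminates the coincidence and pushes the harmonic-difference formula into one of the two pre-assigned sequences, closing the case analysis and proving the theorem.
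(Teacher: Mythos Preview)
Your construction and case analysis are essentially identical to the paper's: the same six rays, the same two interleaved harmonic sequences with denominators congruent to $1$ and $2$ modulo $3$ (the paper writes them as $1/(3j-2)$ and $1/(3j-1)$), and the same three-way split according to the angle at $O$. The only cosmetic differences are that you include $O$ in the set while the paper does not, and that in the $60^\circ$ case you phrase the computation as an ``external bisector / harmonic difference'' identity, whereas the paper simply reapplies Lemma~\ref{bxy} to the triangle formed by $O$, the farther of $P,Q$, and the sought third point, with the nearer of $P,Q$ sitting on the internal bisector---a slightly cleaner way to package the same calculation.
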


\begin{proof}
Take six half-lines $L_1, \ldots, L_6$
meeting at the origin $\mathbf{0}$
and with angular measure $\pi/3$ between any two adjacent half-lines.
%Then $\overline{L_1}=\overline{L_4}$, $\overline{L_2}=\overline{L_5}$
%and $\overline{L_3}=\overline{L_6}$.

Let
\begin{align*}
F&=\{f\in L_i\colon i=1,3,5, \text{ and } \|f\|=1/(3j -2) \text{ for some } j\in\mathbb{N}\},\\
G&=\{g\in L_i\colon i=2,4,6, \text{ and } \|g\|=1/(3j -1) \text{ for some } j\in\mathbb{N}\}.
\end{align*}
Clearly, $\mathbf{0}$ is the accumulation point of $H=F\cup G$.
Next we show that any line through two distinct points $p,q\in H$
also contains a third point of $H$.

%If $p, q\in \overline{L_i}$ for $i=1,2,3$,
If $\overline{pq}$ passes through $\mathbf{0}$,
then it is clear that
$\overline{pq}$ contains a third point of $H$.
%If $\overline{pq}$ is not one of $\overline{L_1}, \overline{L_2}$
%and $\overline{L_3}$,
Otherwise,
$\angle p\mathbf{0}q$ equals ${\pi}/{3}$ or ${2\pi}/{3}$.

\begin{figure}[htbp]
\centering
\includegraphics[width=7cm]{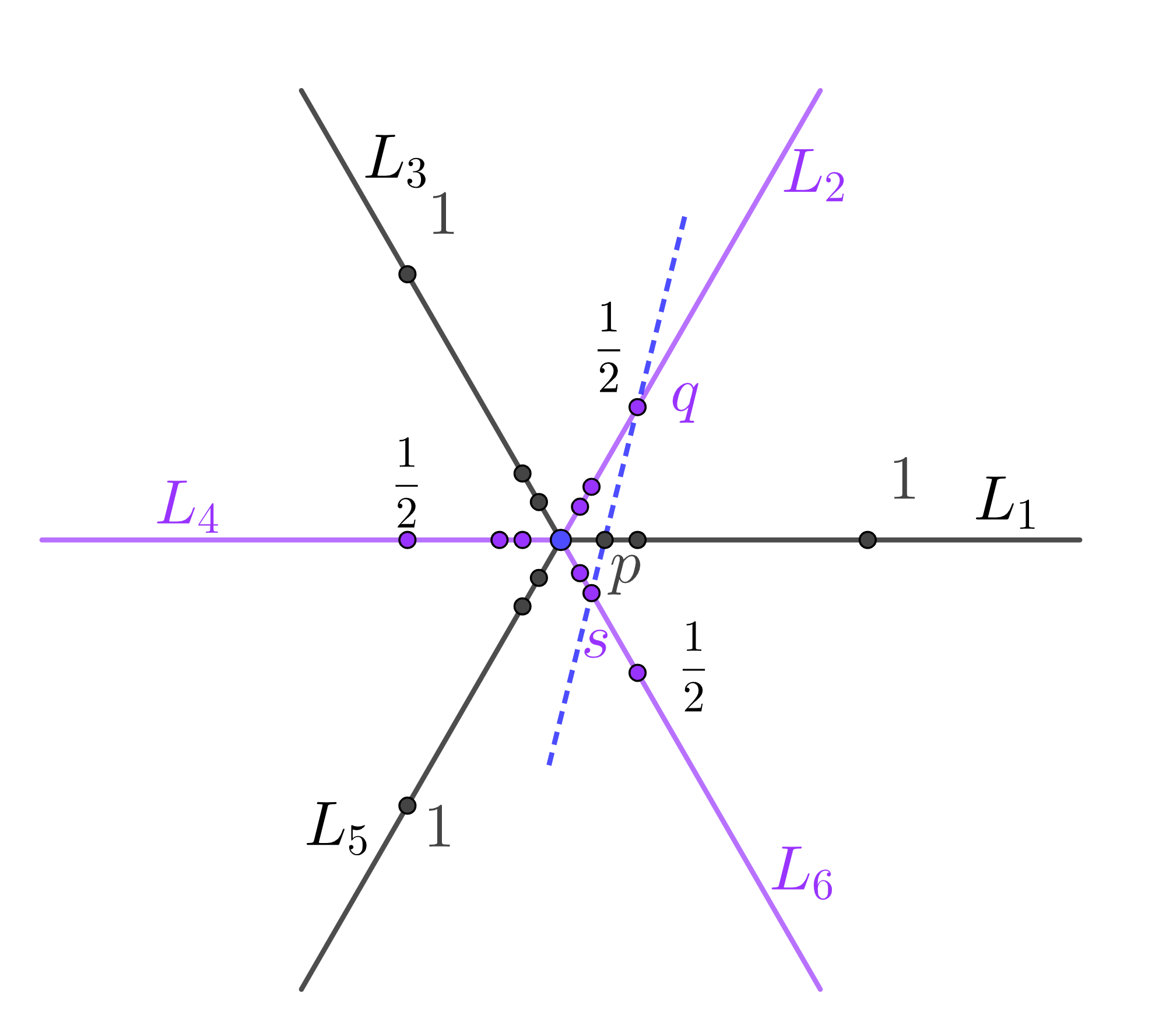}
\includegraphics[width=7cm]{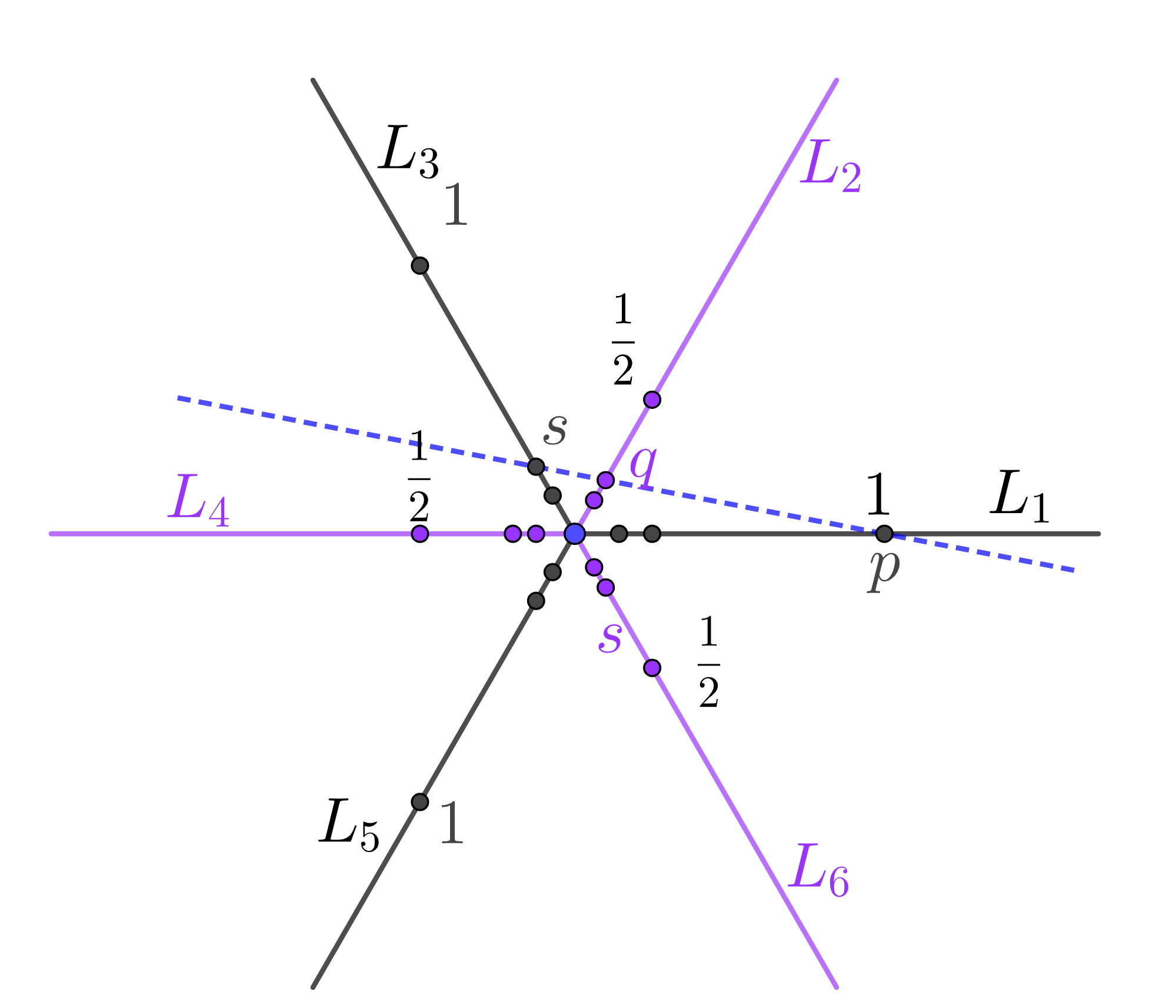}\\
$(a)$ \hspace{6cm} $(b)$
\caption{$\angle p\mathbf{0}q ={\pi}/{3}$.}%{Convex bodies including $\Gamma$-sets.}
\label{case1}
\end{figure}

For $\angle p\mathbf{0}q ={\pi}/{3}$,
without loss of generality, assume that $p\in L_1$ and $q\in L_2$.
Then there exist positive integers $i,j$ such that
$\|p\|=1/(3i-2)$ and $\|q\|=1/(3j-1)$.
If $i> j$, take $s\in L_6$ such that $\|s\|=1/(3(i-j)-1)$;
see Figure \ref{case1} $(a)$.
By Lemma \ref{bxy}, we have $s\in \overline{pq}$.
If $i\leq j$, take $s\in L_3$ such that $\|s\|=1/(3(j-i+1)-2)$
(see Figure \ref{case1} $(b)$).
We have $s\in \overline{pq}$, by Lemma \ref{bxy}.

For $\angle p\mathbf{0}q ={2\pi}/{3}$, assume
that $p\in L_k$ and $q\in L_l$.
Then $k$ and $l$ have the same parity.
If both $k$ and $l$ are odd, without loss of generality,
assume $k=1$ and $l=3$.
Then there exist positive integers $i$ and $j$, such that
$\|p\|=1/(3i-2)$ and $\|q\|=1/(3j-2)$.
Take the point $s\in L_2$ such that $\|s\|=1/(3(i+j-1)-1)$
(see Figure \ref{case2} $(a)$).
It follows from Lemma \ref{bxy} that $s\in \overline{pq}$.

\begin{figure}[htbp]
\centering
\includegraphics[width=7cm]{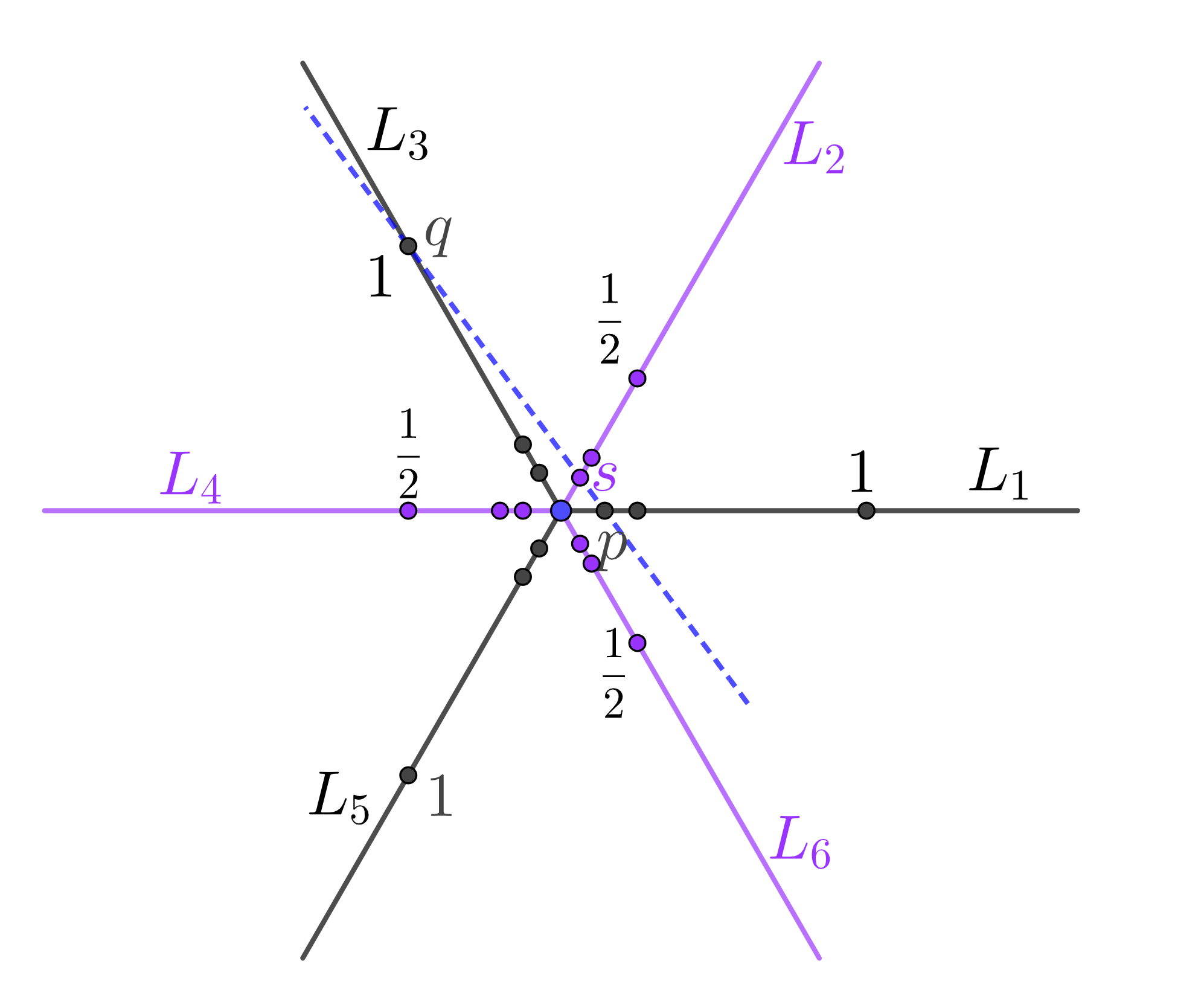}
\includegraphics[width=7cm]{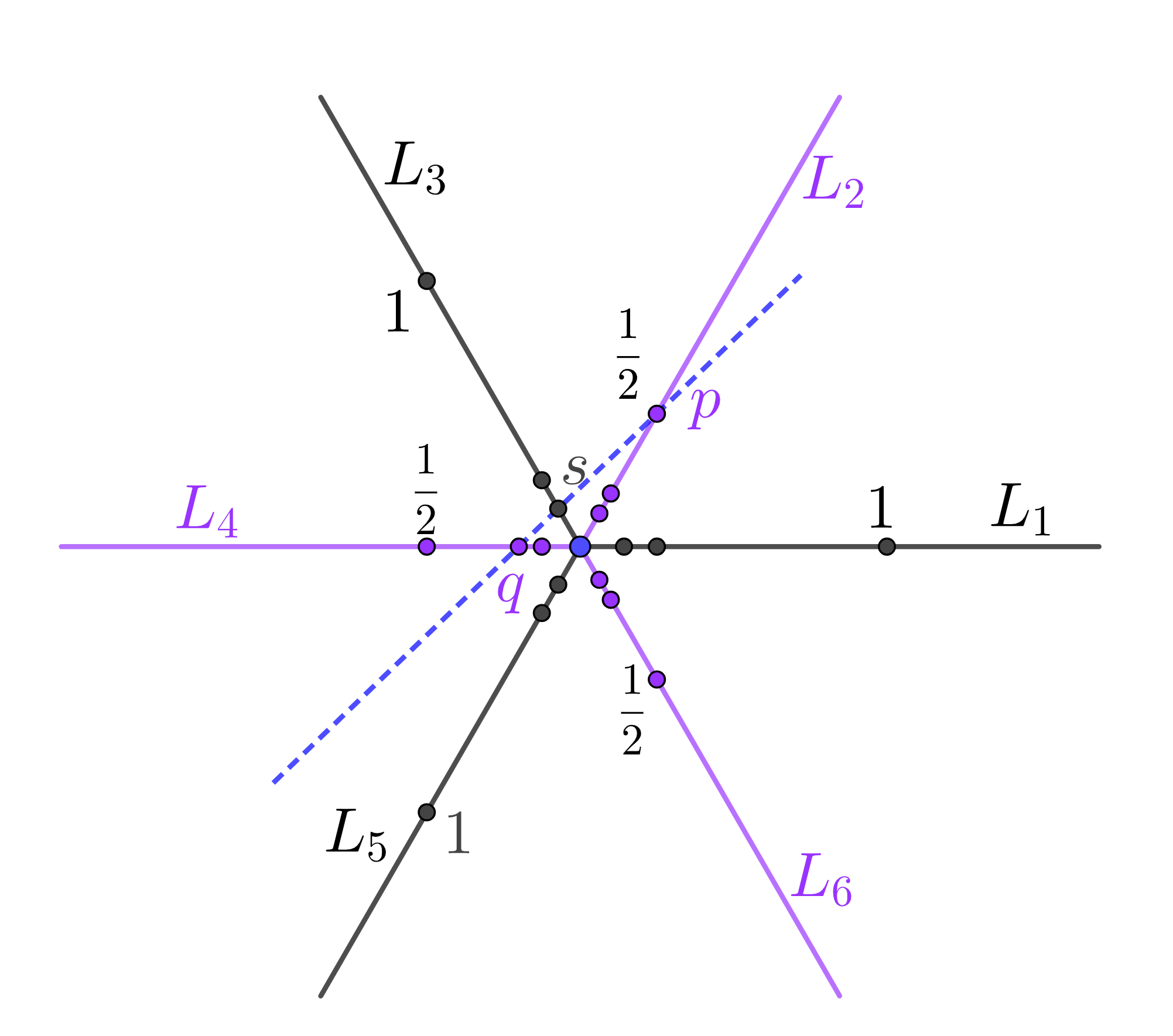}\\
$(a)$ \hspace{6cm} $(b)$
\caption{$\angle p\mathbf{0}q ={2\pi}/{3}$.}%{Convex bodies including $\Gamma$-sets.}
\label{case2}
\end{figure}

If both $k$ and $l$ are even, without loss of generality,
assume $k=2$ and $l=4$.
Then there exist positive integers $i$ and $j$, such that
$\|p\|=1/(3i-1)$ and $\|q\|=1/(3j-1)$.
Take the point $s\in L_3$ such that $\|s\|=1/(3(i+j)-2)$
(see Figure \ref{case2} $(b)$).
By means of Lemma \ref{bxy},
$s\in \overline{pq}$.

The proof is complete.
\end{proof}

\section{Convex systems in the plane}\label{sec:sik}

In this section we prove the following theorem.

\begin{theorem}\label{th:sik}
There is a convex system for $d=2$ with no ordinary line if and only if $n>3$.
\end{theorem}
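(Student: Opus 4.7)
The plan is to prove the two implications of the equivalence separately. For the ``only if'' direction, i.e., $n\le 3$ implies that every convex system admits an ordinary line, the case $n=2$ is immediate, since any outer common tangent line of $K_1,K_2$ meets each set in exactly one point by strict convexity, and there is no third set, so $|L\cap U|=2$. For $n=3$, I would examine the convex hull $C:=\mathrm{conv}\,(K_1\cup K_2\cup K_3)$, whose boundary $\partial C$ alternates arcs of each $\partial K_i$ with straight \emph{bridges}, each bridge lying on the line of an outer common tangent of two of the $K_i$. If all three sets contribute non-degenerate arcs to $\partial C$, there are exactly three bridges, and I would argue that at least one of the bridge lines misses the third set (hence is ordinary). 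If, for contradiction, all three bridge lines were tangent to the opposite set, then each supporting face $L_{ij}\cap C$ would contain three distinct tangent points---the two bridge-endpoints plus a tangency to $K_k$---and convexity of this face would force the $K_k$-arc on $\partial C$ to collapse to a single point, contradicting the three-arc hypothesis. If instead some $K_k$ lies inside $\mathrm{conv}\,(K_i\cup K_j)$ and contributes no arc, then the two bridges of $\mathrm{conv}\,(K_i\cup K_j)$ are ordinary whenever $K_k$ does not touch them; otherwise, I would produce an ordinary line among the inner common tangents of $K_k$ with $K_i$ or $K_j$, using a continuity/rotation argument to show that at least one orientation misses the remaining set.

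For the ``if'' direction, i.e., $n>3$ implies there exists a convex system with no ordinary line, I would construct an explicit example. It suffices to carry out the construction for $n=4$: for $n>4$, one can insert additional strictly convex sets into ``shielded'' positions (small sets placed in regions where every tangent line between the new set and any existing $K_i$ is forced to meet another existing set), without creating new ordinary lines among either the original or the added pairs. For $n=4$, I would seek a highly symmetric configuration---for instance four elongated convex bodies (thin ellipses would be a natural first attempt) arranged with a rotational or reflective symmetry---so that every common tangent line of every pair is blocked by a third body. The arrangement should be tuned so that the external common tangents of each pair, which lie on the boundary of that pair's convex hull, are forced to cross (or be tangent to) a third body, and so that the internal common tangents, which cross between the two sets, are blocked by the fourth body.

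The main obstacle is the $n=4$ construction. One must exhibit a specific quadruple of strictly convex bodies and verify that each of the $4\binom{4}{2}=24$ candidate common tangent lines is non-ordinary, which requires a combination of combinatorial bookkeeping (which third set blocks each tangent) and analytic verification of the tangency and crossing conditions. The $n=3$ analysis, although it involves several degenerate subcases, is conceptually straightforward once one focuses on the bridges of the convex hull.
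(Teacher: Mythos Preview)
Your outline for $n\le 3$ follows the paper's approach: study the boundary of $C=\mathrm{conv}\,U$ and exploit that any supporting line of $C$ meets $U$ only inside the face $L\cap C$. However, your $n=3$ dichotomy (``three non-degenerate arcs and exactly three bridges'' versus ``some $K_k\subset\mathrm{conv}(K_i\cup K_j)$'') is not exhaustive. You omit the case in which one $K_i\cap\partial C$ has two components (so there are four bridges), and more importantly the case in which all three bodies share a common tangent line---then the unique flat face of $C$ carries three tangency points, every minimal bridge on that face lies on a non-ordinary line, and your three-arc contradiction does not fire because the hypothesis ``exactly three bridges'' already fails. The paper resolves this residual case by passing to the four common tangents of an adjacent pair and checking that two of them miss the third body; your ``continuity/rotation argument'' stands in for exactly this step but is not carried out.

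The substantive gap is the $n>3$ construction. You correctly identify it as the crux but do not produce it, and your proposed first attempt---four thin ellipses with rotational symmetry---does not work: the outer common tangents of an opposite pair are lines close to the axis through their centers and will miss the two perpendicular ellipses entirely, yielding ordinary lines. The paper's construction is of a different nature. Four strictly convex bodies are arranged in a ring so that each consecutive pair $K_i,K_{i+1}$ is placed with nearly coincident ``edges'' (actually circular arcs of very large radius); this forces the two \emph{separating} common tangents of each adjacent pair to be almost identical and hence to cross the body on the far side of the ring. The two diagonals $a_1b_3$ and $a_3b_1$ are then made to pass through single points of $K_4$ and $K_2$, killing the remaining candidate ordinary lines. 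The extra $n-4$ bodies sit in the enclosed central region, where every line through them already meets two of $K_1,\dots,K_4$. Without such an explicit example (or a genuinely different one), the ``if'' direction is unproved; this is the real content of the theorem, and your proposal does not yet supply it.
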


\begin{proof}
The case $n=2$ is trivial, the four common tangents of the two sets in $\mathcal F$ being ordinary. Recall that $U=\bigcup_{i=1}^n K_i$ and define $C=\mathrm{conv}\,U$. The boundary $\partial C$ of $C$ contains sets of the form $K_i\cap \partial C$ and when $n=3$ each such set has at most two connected components and each such component is either an arc or a point, Figure~\ref{fig:d2} shows these cases. There is a segment $xy \subset \partial C$ with $x,y \in U$ but no other point of this segment in $U$. Then the line $\overline{xy}$ is an ordinary line unless we encounter the rightmost case of Figure~\ref{fig:d2}. In that case two out of the four common tangents to $K_1$ and $K_2$ (or to $K_2$ and $K_3$) are ordinary lines.

\begin{figure}[htbp]
\centering
\includegraphics[width=14cm]{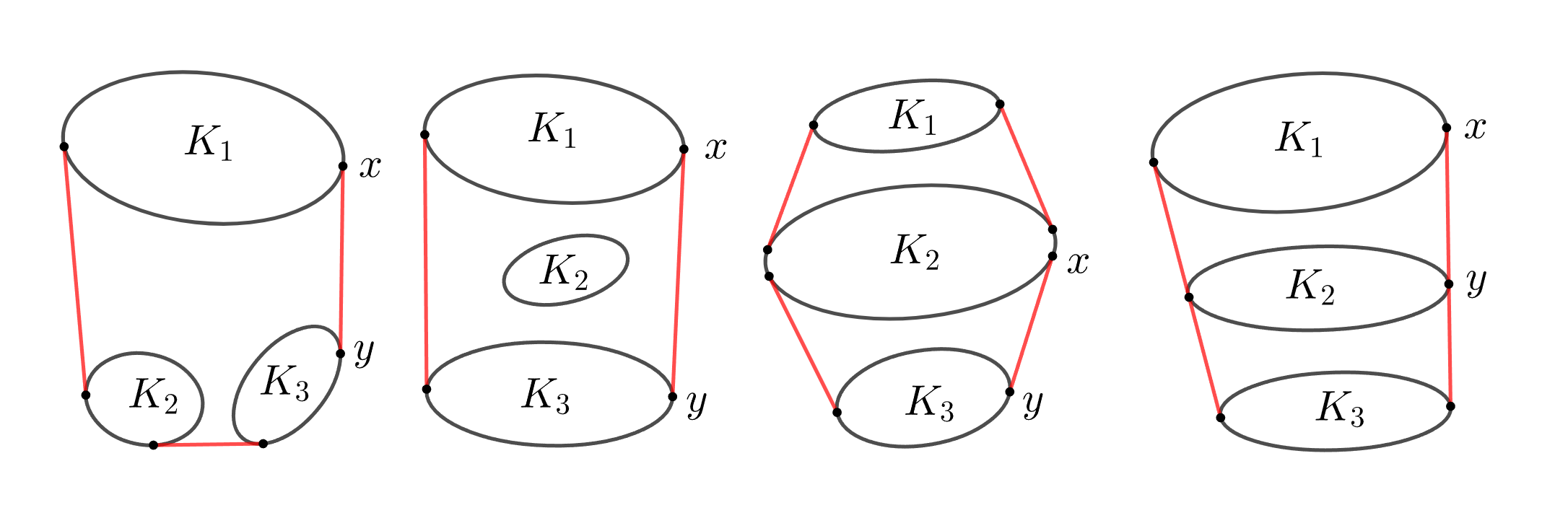}
\caption{The four cases when $n=3$.}
\label{fig:d2}
\end{figure}

Figure~\ref{fig:n>3} shows the construction for $n>3$. Some explanation is needed.
The points $a_1,b_1$ are in $K_1$ and the points $a_3,b_3$ are in $K_3$. The line-segment $a_1b_3$ contains a single point $c_4 \in K_4$ and, similarly,  $a_3b_1$ contains a single point $c_2 \in K_2$. So the lines $\overline{a_1b_3}$ and $\overline{a_3b_1}$ are  not ordinary lines.

\begin{figure}[htbp]
\centering
\includegraphics[width=8cm]{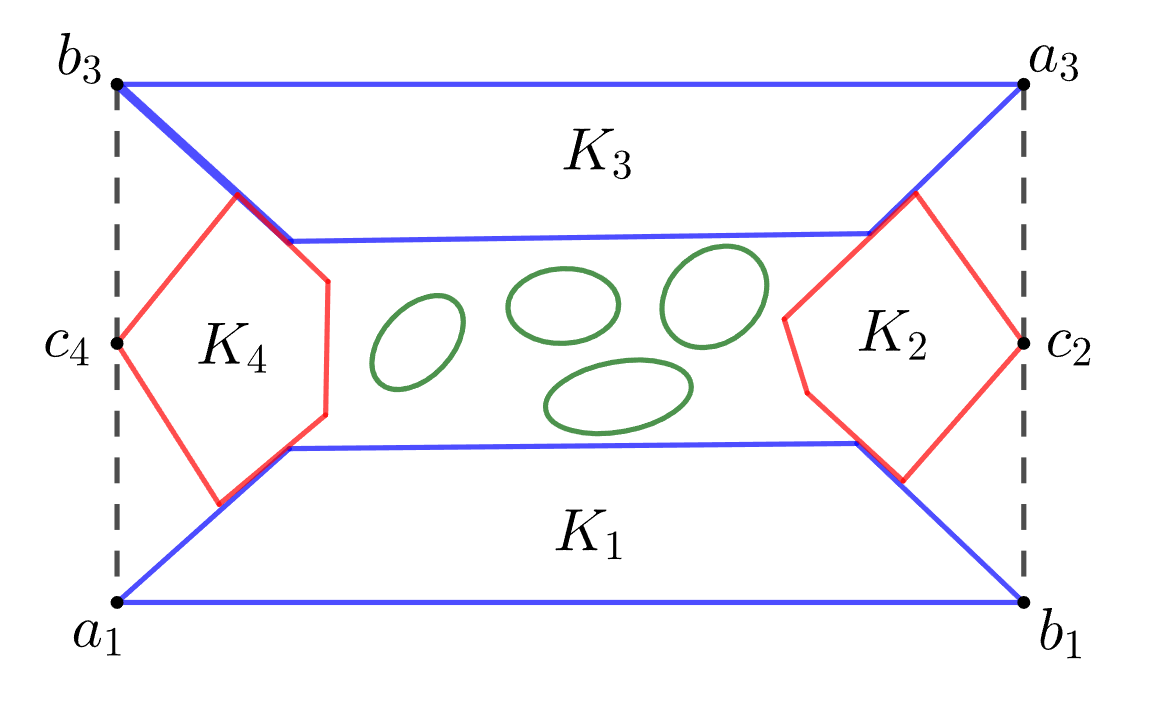}
\caption{The construction for $n>3$.}
\label{fig:n>3}
\end{figure}

The sets $K_1,K_3$ look like convex quadrilaterals and $K_2,K_4$ look like pentagons. But in fact their ``edges" are circular arcs of circles with very large radii. Moreover, the ``common edge" of $K_1$ and $K_2$ is in fact not a common edge, just $K_1$ and $K_2$ are very close. Consequently,  the corresponding two common (and separating) tangents are very close to each other and they inevitably intersect $K_3$. The case of ``common edges" of $K_2,K_3$ and $K_3,K_4$ and $K_4,K_1$ are treated similarly. There are $n-4$ sets $K_i \in \mathcal {F}$ in the middle region. It is evident that they can be chosen so that the system has no ordinary line whatsoever.
\end{proof}

\section{Convex systems in $\mathbb{R}^d$, $d>2$}\label{sec:sik}

\begin{theorem}\label{th:high}
For $d>2$ every convex system with $n\ge 2$ has an ordinary line.
\end{theorem}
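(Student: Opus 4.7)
I plan to obtain an ordinary line as a chord of a supporting hyperplane of $C=\mathrm{conv}\,U$ that is tangent to exactly two of the sets $K_i$. Using the support function $h_i(u)=\max\{\langle u,x\rangle:x\in K_i\}$ on $S^{d-1}$, which by strict convexity has a unique maximizer $p_i(u)\in\partial K_i$ and is of class $C^1$, set $H(u)=\max_i h_i(u)$ and $A(u)=\{i:h_i(u)=H(u)\}$; the supporting hyperplane of $C$ with outward normal $u$ meets $U$ precisely at $\{p_i(u):i\in A(u)\}$. If some $u^*\in S^{d-1}$ satisfies $|A(u^*)|=2$, say $A(u^*)=\{i,j\}$, then the line $L=\overline{p_i(u^*)\,p_j(u^*)}$ will be ordinary: $L$ lies in the supporting hyperplane and so misses every $K_k$ with $k\neq i,j$ (such a $K_k$ has $h_k(u^*)<H(u^*)$ and lies strictly below the hyperplane), while strict convexity combined with tangency forces $L\cap K_i=\{p_i(u^*)\}$ and $L\cap K_j=\{p_j(u^*)\}$.

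To exhibit such a $u^*$, I will use the following facts. The closed cells $V_i=\{u\in S^{d-1}:h_i(u)=H(u)\}$ cover $S^{d-1}$, and at least two of them have nonempty interior (otherwise a single $V_i$ is all of $S^{d-1}$, forcing $K_j\subseteq K_i$ for every $j\neq i$ and contradicting pairwise disjointness). Pick such a $V_i$; its topological boundary $\partial V_i\subset S^{d-1}$ is then nonempty, and $|A|\geq 2$ everywhere on it. Now $h_i-h_j$ is $C^1$ with tangential gradient $p_i(u)-p_j(u)$; this is nonzero on $\{h_i=h_j\}$ because $K_i\cap K_j=\emptyset$, and lies in $T_uS^{d-1}$ by Euler's identity (one has $\langle u,p_i(u)-p_j(u)\rangle=h_i(u)-h_j(u)=0$ at such $u$). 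Hence $\{h_i=h_j\}$ is locally a $C^1$ hypersurface of $S^{d-1}$ of dimension $d-2$, and adjoining a further equation $h_j=h_k$ for a third index $k$ is generically transverse and drops the dimension to $d-3$. Since $d\geq 3$ gives $d-2>d-3$, the portion of $\partial V_i$ on which $|A|=2$ is nonempty and supplies the required $u^*$.

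The main obstacle will be the non-transverse triple-tie case, in which at some junction $u_0$ the three contact points $p_i(u_0),p_j(u_0),p_k(u_0)$ happen to be collinear on the common supporting hyperplane; the two tangential gradients $p_i-p_j$ and $p_j-p_k$ are then parallel and the codimension count above is not automatic. I plan to handle this by observing that collinearity of three points on a $(d-1)$-dimensional hyperplane is itself a codimension-one condition on $u_0$, so iterating the same argument confines the fully degenerate locus to dimension at most $d-4$, still strictly below $d-2$ for $d\geq 3$; therefore it cannot cover the codimension-one set $\partial V_i$, and a non-degenerate $u^*$ with $|A(u^*)|=2$ must exist.
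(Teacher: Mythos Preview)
Your overall strategy---find a direction $u^*$ for which the supporting hyperplane of $C$ touches exactly two of the $K_i$, and take the chord between the two contact points---can fail outright, not merely in its dimension-counting justification. Take $d=3$ and let $K_1,K_2,K_3$ be closed balls with centres $0,\,10v,\,20v$ (for some unit vector $v$) and radii $1,2,3$ respectively; these are pairwise disjoint and strictly convex, so they form a convex system. Writing $s=\langle u,v\rangle$ one has $h_i(u)=10(i-1)s+i$, so all three support functions coincide exactly on the circle $\{s=-1/10\}\subset S^2$, while off this circle a single $h_i$ is strictly largest ($h_1$ for $s<-1/10$, $h_3$ for $s>-1/10$). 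Hence $|A(u)|\in\{1,3\}$ for every $u\in S^2$: there is \emph{no} $u^*$ with $|A(u^*)|=2$. Adding a fourth ball with centre $30v$ and radius $4$ gives $|A(u)|\in\{1,4\}$, so the obstruction is not confined to $n=3$.

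This same example shows why your treatment of the ``non-transverse triple-tie'' case cannot work. On the tie circle the contact points are $p_i(u)=10(i-1)v+iu$, so $p_i(u)-p_j(u)=(i-j)(10v+u)$ and the $p_i(u)$ are collinear for \emph{every} $u$ on that circle. Thus collinearity of the contact points is not a codimension-one constraint on $u$ here; it holds identically on the entire $(d-2)$-dimensional tie locus, and your ``iterate to dimension at most $d-4$'' step has no force.

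The paper's proof sidesteps this by not insisting on a supporting hyperplane that touches exactly two bodies. For $n\le 3$ it slices by a $2$-plane meeting every $K_i$ in its interior and invokes the planar result (Theorem~\ref{th:sik}). For $n>3$ it takes a supporting hyperplane $P$ at a point of $\partial C\setminus U$: if the contact points $a_1,\dots,a_m$ in $P$ are not collinear it applies the Sylvester--Gallai theorem inside $P$; if they \emph{are} collinear---precisely your degenerate situation---it tilts to a nearby $2$-plane $P^*$ meeting only $K_1$ and $K_2$ and applies the planar $n=2$ case there. To salvage your approach you would need an analogous fallback whenever no $u^*$ with $|A(u^*)|=2$ exists.
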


\begin{proof} The cases $n=2,3$ are very simple: just take a 2-dimensional plane $P$ that intersects every $K_i \in \mathcal F$ in its interior. Then the family $\mathcal F^2:=\{K_i\cap P: K_i\in \mathcal F\}$ is a convex system in the plane $P$, and Theorem~\ref{th:sik} applies.

Next comes the case $d=3$ and $n>3$. It is evident that there is a point $z \in \partial C \setminus U$. Let $P$ be the supporting plane to $C$ at $z\in \partial C$ with outer unit normal $u$. As every $K_i\in \mathcal F$ is strictly convex and 3-dimensional, $K_i\cap P$ is either the emptyset or a single point. We may assume that $a_i$ is the single point in $K_i \cap P$ for $i=1,\ldots,m$ and $K_i\cap P=\emptyset$ for $i>m$.

Then $z \in \mathrm{conv}\, \{a_1,\ldots,a_m\}$ but $z \notin U$ implying that $m\ge 2$. If $m=2$ then $\overline{a_1a_2}$ is an ordinary line and we are done. So $m>2$. By the Sylvester-Gallai theorem if the points $a_1,\ldots,a_m$ are not collinear, then there is an ordinary line $\overline{a_ia_j}$ and we are done again.

\begin{figure}[htbp]
\centering
\includegraphics[width=11cm]{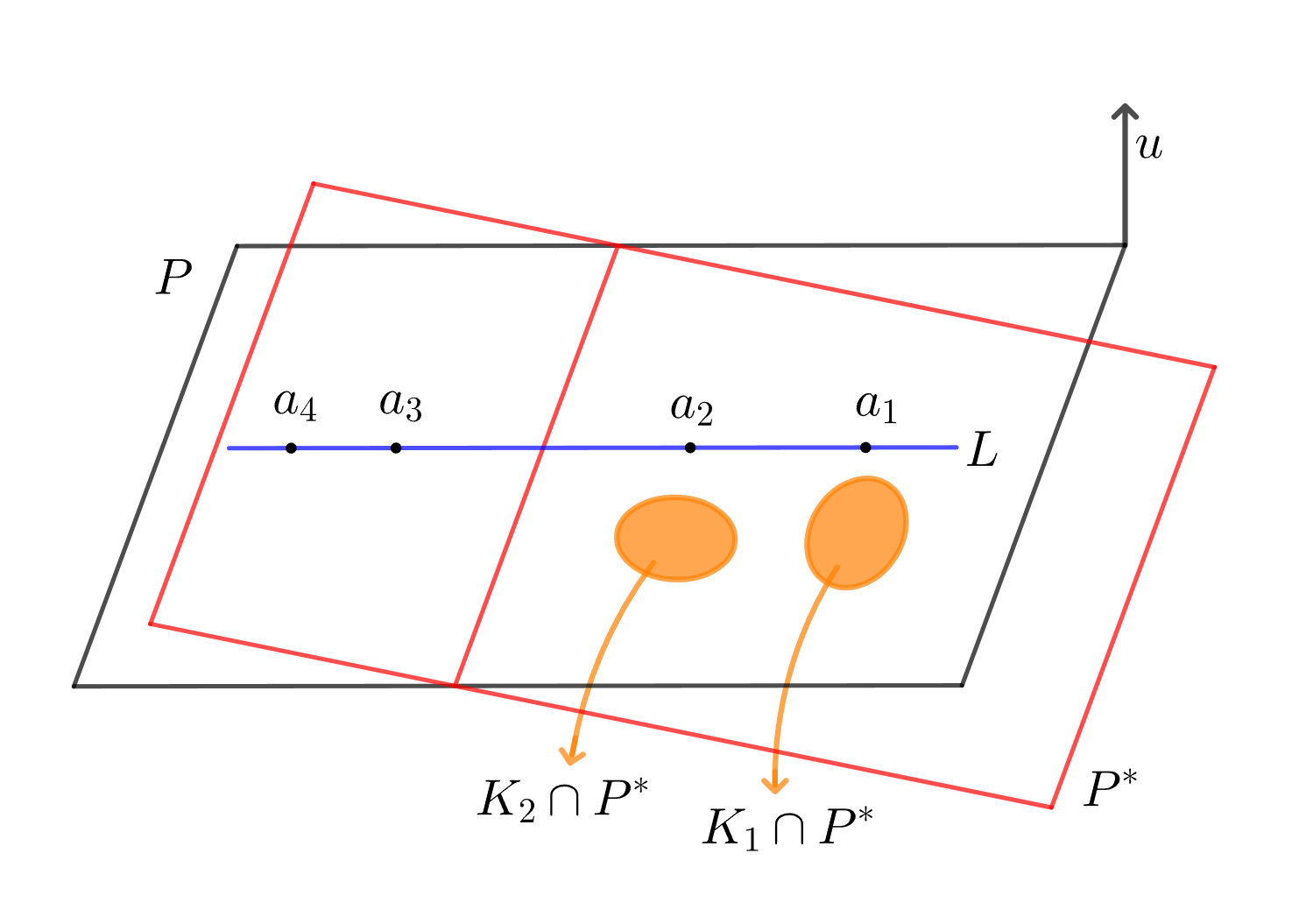}
\caption{$P$ and $P^*$ in case $d=3$.}
\label{fig:d=3}
\end{figure}

So the points $a_1,\ldots,a_m$ are all on a line $L$ and we may assume that they come in the order $a_1,\ldots,a_m$. Let $P^*$ be the 2-dimensional plane with outer normal $u+\varepsilon(a_1-a_2)$ that contains the point $\frac 12(a_2+a_3)$; see Figure~\ref{fig:d=3}. For $\varepsilon>0$ small enough the plane $P^*$ intersects $K_1$ and $K_2$ and no other $K_i$. The sets $K_1\cap P^*$ and $K_2\cap P^*$ form a convex system in $P^*$ and Theorem~\ref{th:sik} guarantees the existence of an ordinary line $\overline{b_1b_2}$ with $b_i \in K_i\cap P^*$ (in $P^*$). This line is an ordinary line for the original family $\mathcal F$ as well.

The same proof works for $d>3$ as well. The only difference is that in the supporting hyperplane $P$ to $C$ at $z \in \partial C \setminus U$ one has to use the $(d-1)$-dimensional version of the Sylvester-Gallai theorem, that can be proved for instance by Kelly's method~\cite{Cox48}.
\end{proof}

We remark that there is another proof for $d>3$. Namely, choose a point $a_i$ in the interior of $K_i$ for $i=1,2,3,4$ and let $H$ be the 3-dimensional affine subspace spanned by these points. One can check that for a suitable choice of these four points the family $\mathcal {F}^3$ consisting of the nonempty sets $H\cap K_i$  ($ i\in \{1, \ldots, n\}$) is a convex system in $H$. So the 3-dimensional case applies and gives an ordinary line for $\mathcal {F}^3$ in $H$ which is an ordinary line for $\mathcal {F}$ as well. We omit the details.

\section{Line-segments in the plane}\label{sec:segments}

\begin{theorem}\label{line-Sym}
Let $\mathcal{F}$ be a family of at most $5$ line-segments
with pairwise disjoint relative interiors in $\mathbb{R}^2$
and $M\supset \bigcup \mathcal{F}$,
such that $M\setminus \bigcup \mathcal{F}$
is finite.
If no line meets $M$ in exactly two points, then $M$
is collinear.
\end{theorem}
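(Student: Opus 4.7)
We argue by contraposition. Suppose $M$ is not collinear; the goal is to exhibit a line meeting $M$ in exactly two points. Let $E=M\setminus\bigcup\mathcal{F}$ (finite) and let $V\subset M$ consist of $E$ together with all endpoints of the segments in $\mathcal{F}$, so $V$ is finite. Suppose first that every segment in $\mathcal{F}$ lies on a single line $\ell^{*}$. Since $M$ is not collinear, some extra $e\in E$ lies off $\ell^{*}$. Pick $p$ in the relative interior of some segment of $\mathcal{F}$ such that $\overline{ep}$ differs from every line $\overline{eq}$ with $q\in V\setminus\{e\}$; this is possible because only finitely many values of $p$ are forbidden. Then $\overline{ep}\cap\bigcup\mathcal{F}\subset\overline{ep}\cap\ell^{*}=\{p\}$ and $\overline{ep}\cap E=\{e\}$, so $\overline{ep}$ is an ordinary line.

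Assume now the segments of $\mathcal{F}$ are not all collinear, and pick $s_1,s_2\in\mathcal{F}$ on distinct lines. The set $\Sigma$ of lines meeting both $s_1$ and $s_2$ each in a single interior point is a two-parameter region in line-space, and every $\ell\in\Sigma$ satisfies $|\ell\cap(s_1\cup s_2)|=2$. We want $\ell\in\Sigma$ that additionally avoids the relative interiors of the other segments $s_3,\ldots,s_n$ and avoids every extra in $E$. Each extra $e'$ blocks only a one-parameter curve in $\Sigma$ (the lines through $e'$), so extras remove only a measure-zero subset of $\Sigma$. The genuine obstruction is the at most $n-2\le 3$ remaining segments, each of which can block a two-dimensional sub-region of $\Sigma$, so a naive dimension count is not sufficient.

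The plan for this case is to first examine the four ``endpoint chords'' $\overline{xy}$ with $x$ an endpoint of $s_1$ and $y$ an endpoint of $s_2$, the direct analogues of the four common tangents used in the proof of Theorem~\ref{th:sik}. If any of these chords is an ordinary line for $M$, we are done. Otherwise each one is blocked either by a specific extra lying in its relative interior, or by a specific segment $s_j$ crossing it transversally. The bound $n\le 5$ then forces very rigid combinatorial constraints on how (at most) three segments can simultaneously obstruct the four endpoint chords of every non-collinear pair. Combined with a careful choice of $(s_1,s_2)$ --- for instance, as a pair whose union contains a face of the convex hull of $\bigcup\mathcal{F}$, so that no third segment can sit ``between'' $s_1$ and $s_2$ on that side --- this should yield the required ordinary line. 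The main technical step, and the place where $n\le 5$ is used essentially, is this case analysis ruling out exotic blocking configurations of three segments (with $n\ge 6$ such configurations genuinely appear and the theorem presumably fails).
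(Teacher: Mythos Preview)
Your argument is incomplete: the second case (the segments of $\mathcal{F}$ not all collinear) is only a plan, not a proof. You say the four endpoint chords of $s_1,s_2$ ``should'' yield an ordinary line once the blocking configurations of the remaining $\le 3$ segments are ruled out, but you do not carry out this case analysis, and it is less routine than you suggest. Two specific obstacles: (i) since only relative interiors are assumed disjoint, segments may share endpoints, so $s_1,s_2$ might have only three or even two distinct endpoints, collapsing your four-chord scheme; (ii) extras in $E$ can sit exactly on an endpoint chord, so the four chords can be blocked by a mixture of segments and extras, and you have no control on $|E|$. The counterexamples with six segments (Figure~\ref{coun-6}) show that shared endpoints are precisely where the combinatorics becomes delicate, so a correct argument must engage with them.

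The paper's proof takes a different and cleaner route that avoids any direct search for an ordinary line. Assuming no ordinary line exists, it looks at the convex hull $P=\mathrm{conv}\bigcup\mathcal{F}$ and proves a local \emph{Claim}: every vertex $a$ of $P$ is the endpoint of at least three segments of $\mathcal{F}$. The reason is that with only one or two segments at $a$, one can produce a one-parameter family of lines through points near $a$ each requiring a third point of $M$, forcing $M\setminus\bigcup\mathcal{F}$ to be infinite. From the Claim one gets immediately that every side of $P$ lies in $\mathcal{F}$ (otherwise the two endpoints of a missing side already use six segments), and then that each vertex needs an additional diagonal segment in $\mathcal{F}\setminus\mathcal{P}$; a straightforward count now gives $|\mathcal{F}|\ge 6$ for a triangle or quadrilateral and $\ge 8$ for a pentagon, the desired contradiction. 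This convex-hull-plus-counting argument is where the bound $5$ enters sharply, and it sidesteps entirely the tangent/chord analysis you are attempting.
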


\begin{proof}
Suppose that $M$ is not collinear.
Then $\bigcup \mathcal{F}$ is not collinear.
Indeed, this is clear for $M= \bigcup \mathcal{F}$.
For $M\neq \bigcup \mathcal{F}$,
if $\bigcup \mathcal{F}$ is collinear,
then there exists a point $b\in M\setminus\overline{\bigcup \mathcal{F}}$.
The existence of a third point in $M$
on any line $\overline{\alpha b}$ with $\alpha\in \bigcup \mathcal{F}$
yields $M\setminus \bigcup\mathcal{F}$ infinite, absurd.
So, $P=\mathrm{conv} \bigcup\mathcal{F}$ is not a line-segment
and $\mathrm{card}\mathcal{F}\geq 2$.

\underline{Claim}.
For each vertex $a$ of $P$, there exist three
line-segments from $\mathcal{F}$ having
an endpoint at $a$.

We prove the Claim.
If $a$ is the endpoint of only one line-segment from $\mathcal{F}$,
let $A\in\mathcal{F}$ be such a line-segment.
Let $L$ and $L'$ be the two sides of $P$ with $a$ as an endpoint.
Then $A\not\subset L$ or $A\not\subset L'$.
Suppose that $A\not\subset L$.
Consider the point $b\in M\cap L\setminus\{a\}$
closest to $a$.
The existence of a third point in $M$ on any line $\overline{b\alpha}$
with $\alpha\in A\setminus\{a\}$
yields $M\setminus \bigcup\mathcal{F}$ infinite, absurd.
Hence, there exists another line-segment $A'\in \mathcal{F}$
with $a$ as an endpoint.

If no other line-segment has $a$ as an endpoint, then the
existence of a third point of $M$ on each line $\overline{\alpha\alpha'}$
with $\alpha\in A\setminus\{a\}, \alpha'\in A'\setminus\{a\}$,
both close to $a$,
yields $M\setminus\bigcup\mathcal{F}$ infinite, absurd again.

Hence, there are 3 line-segments with $a$ as an endpoint,
and the Claim is proven.

Let $\mathcal{P}$ be the set of sides of $P$.
We have $\mathcal{P}\subset \mathcal{F}$.
Indeed, assume $ab$ is a side of the polygon $P$ not in $\mathcal{F}$.
Then line-segments from $\mathcal{F}$
with $a,b$ as endpoints, respectively, are distinct.
Following the Claim, there are 3 line-segments in $\mathcal{F}$
with an endpoint in $a$, and another $3$ line-segments with
an endpoint in $b$. Thus, $\mathrm{card}\mathcal{F}\geq 6$,
a contradiction.

Therefore, $\mathrm{card}\mathcal{P}\leq 5$.
Let $A, ab, B$ be consecutive sides of $P$.
Another line-segment of $\mathcal{F}$ will have
$a$ as an endpoint, the argument being as above.

Hence, for every vertex $v$ of $P$, there is a line-segment
in $\mathcal{F}\setminus \mathcal{P}$, with an endpoint
at $v$. This yields $\mathrm{card}\mathcal{F}\geq 6$,
if $P$ is a triangle or a quadrilateral,
and $\mathrm{card}\mathcal{F}\geq 8$ if $P$ is a pentagon.
A contradiction is obtained.
\end{proof}

\begin{figure}[htbp]
\centering
\includegraphics[width=9cm]{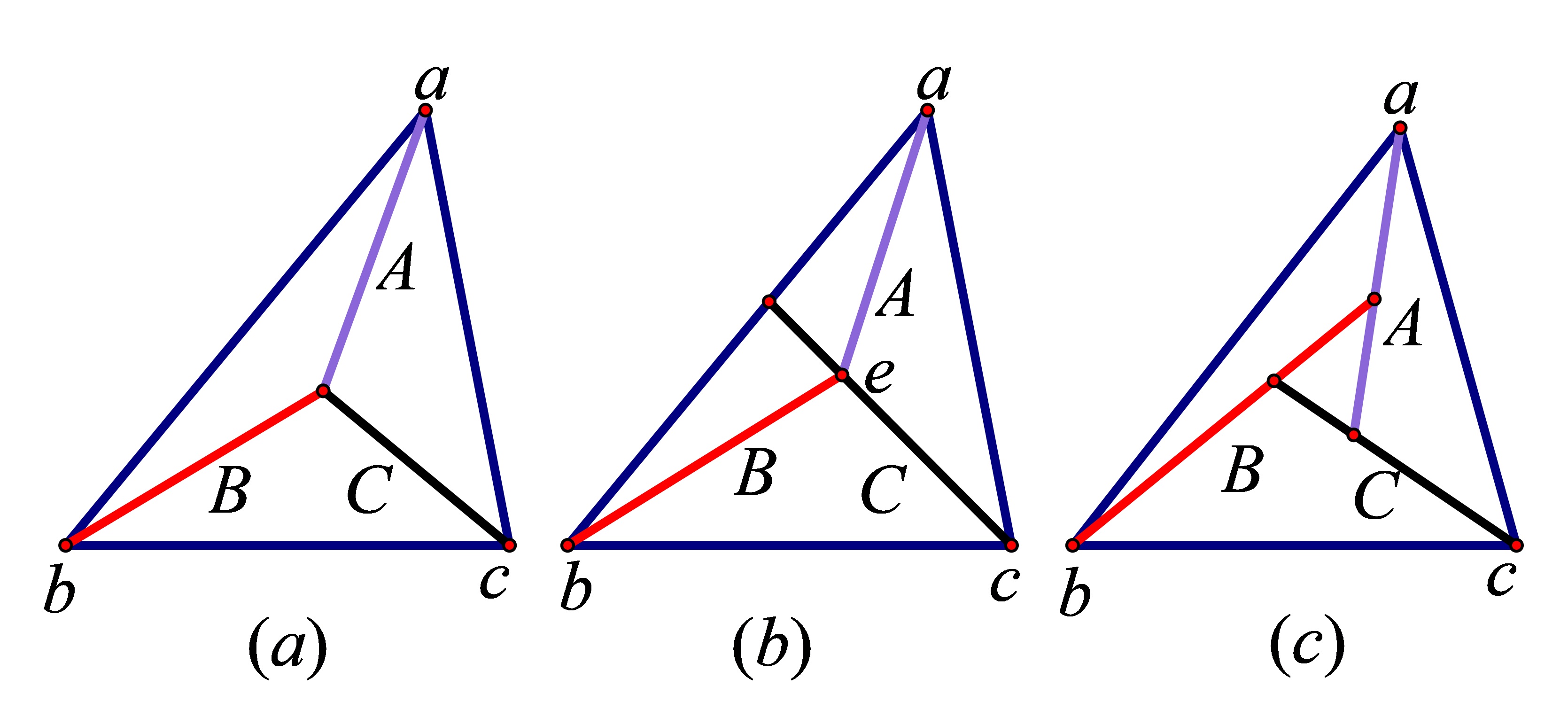}\\
\includegraphics[width=12cm]{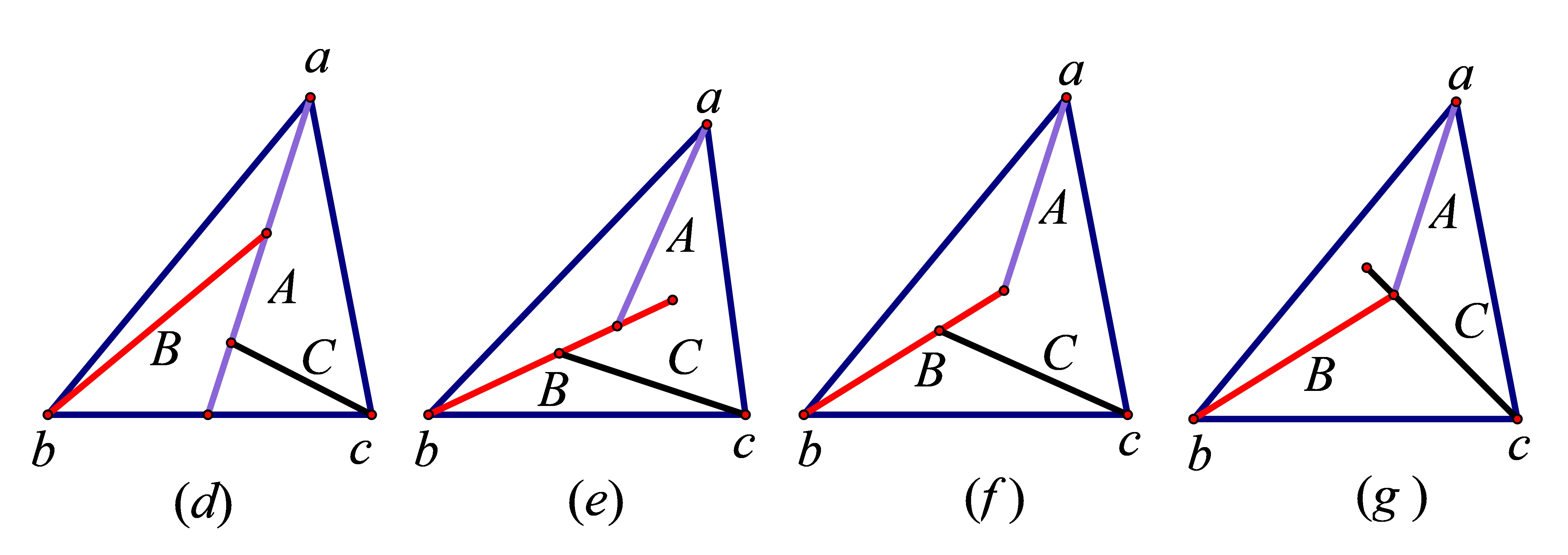}
\caption{Counterexamples with $\mathrm{card}\mathcal{F}=6$.}%{Convex bodies including $\Gamma$-sets.}
\label{coun-6}
\end{figure}

Notice that $5$ is  best possible in Theorem \ref{line-Sym}.
For $\mathrm{card}\mathcal{F}=6$, there are counterexamples.
See Figure \ref{coun-6}.
$\bigcup \mathcal{F}$ can be regarded as a planar geometric graph.
As graphs, only 4 of them are 3-connected (see Figure \ref{coun-6} $(a)-(d)$), which will be shown later.
In general, they can have connectivity 1, 2, or 3.

%\begin{figure}[htbp]
%\centering
%\includegraphics[width=10cm]{}
%\caption{}%{Convex bodies including $\Gamma$-sets.}
%\label{counter-6}
%\end{figure}

The Claim from the preceding proof also holds for any finite family of line-segments.

\begin{lemma}\label{3-segments}
Let $\mathcal{F}$ be a finite family of line-segments
with pairwise disjoint relative interiors in $\mathbb{R}^2$.
If~~$\bigcup\mathcal{F}$ is not collinear
and meets no line in exactly two points,
then, for each vertex of $\mathrm{conv} \bigcup\mathcal{F}$,
there exist three line-segments from $\mathcal{F}$ having
an endpoint there.
\end{lemma}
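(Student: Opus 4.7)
My plan is to follow the argument of the Claim inside the proof of Theorem~\ref{line-Sym} with $M=\bigcup\mathcal F$. In that special case $M\setminus\bigcup\mathcal F=\emptyset$ is trivially finite, so the original proof's contradictions, phrased as ``yields $M\setminus\bigcup\mathcal F$ infinite, absurd,'' must be re-derived: the contradiction must instead come from exhibiting an actual line meeting $\bigcup\mathcal F$ in exactly two points, directly contradicting the lemma's hypothesis. Finiteness of $\mathcal F$ is what makes this possible.

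Fix a vertex $a$ of $P:=\mathrm{conv}\bigcup\mathcal F$. Since $a$ is extreme, any segment of $\mathcal F$ containing $a$ must have $a$ as an endpoint, and at least one such segment exists. Suppose for contradiction that at most two segments of $\mathcal F$ have $a$ as endpoint. In the one-segment case, let $A$ be this segment and $L,L'$ the two sides of $P$ at $a$, with $A\not\subset L$ (by symmetry). First I would verify that $a$ is isolated in $L\cap\bigcup\mathcal F$: otherwise some $B\in\mathcal F$ would contain a sequence in $L$ converging to $a$, forcing $B\subset\overline L$, hence $B\subset L$, hence $a\in B$; extremality of $a$ would then make $a$ an endpoint of $B$, contradicting uniqueness. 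Consequently the closest point $b\in(L\cap\bigcup\mathcal F)\setminus\{a\}$ exists. For $\alpha\in A\setminus\{a\}$ close enough to $a$ and chosen to avoid the finitely many directions at $b$ that lie along some segment of $\mathcal F$, I claim $\overline{b\alpha}\cap\bigcup\mathcal F=\{b,\alpha\}$, giving the desired two-point line.

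To justify this claim: since $\bigcup\mathcal F\subset P$, the line's intersection with $\bigcup\mathcal F$ lies in the chord $\overline{b\alpha}\cap P$. As $\alpha\to a$ this chord is confined to an arbitrarily thin strip along $\overline L$, so segments of $\mathcal F$ disjoint from $\overline L$ are excluded, because finiteness of $\mathcal F$ gives a uniform positive distance. Segments contained in $\overline L$ can meet the chord only at $b$, since the chord's line is not $\overline L$. Segments touching $\overline L$ at an endpoint $p\neq b$ satisfy $|p-a|>|b-a|$ by minimality of $b$, and a direct computation shows that for small $\alpha$ the chord misses them entirely (the unique intersection of their line with the chord's line lies on the wrong side of $p$). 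Segments with $b$ as an endpoint and leaving $\overline L$ meet the chord only at $b$ once the generic choice of $\alpha$ rules out the finitely many bad directions. In the two-segment case, with segments $A,A'$ at $a$, any pair $(\alpha,\alpha')\in(A\setminus\{a\})\times(A'\setminus\{a\})$ yields a line whose intersection with $A\cup A'$ is exactly $\{\alpha,\alpha'\}$ (since $A\cap A'=\{a\}$ forces the line to coincide with neither $\overline A$ nor $\overline{A'}$); for each of the finitely many $B\in\mathcal F\setminus\{A,A'\}$, the condition ``$\overline{\alpha\alpha'}$ meets $B$'' cuts out a one-dimensional locus in the two-dimensional parameter space $A\times A'$, so a generic pair close to $(a,a)$ avoids all these loci and produces the same two-point contradiction.

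The principal obstacle is the bookkeeping in the one-segment case, particularly ruling out intersections of the chord with segments touching $\overline L$ away from $b$: this rests on combining the minimality of $b$ with the ``thin strip'' geometry, and depends essentially on finiteness of $\mathcal F$. Once this is handled, the two-segment case is a routine dimension count.
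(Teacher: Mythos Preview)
Your one-segment case is sound and in fact fills in details that the paper's Claim proof leaves implicit; the case split on how a segment $B$ can meet $\overline L$ (contained in $\overline L$; endpoint at $a$; endpoint strictly between $a$ and $b$; endpoint at $b$; endpoint beyond $b$) is exactly what is needed, and each case is disposed of correctly.

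The two-segment case has a genuine gap. Your assertion that ``$\overline{\alpha\alpha'}$ meets $B$'' cuts out a one-dimensional locus in $A\times A'$ is false: for a segment $B$ (as opposed to a single point) this locus is typically two-dimensional, and it can reach arbitrarily close to $(a,a)$. Concretely, take $a=(0,0)$, $A$ towards $(1,\tfrac12)$, $A'$ towards $(\tfrac12,1)$, and $B$ the segment from $(0.05,1)$ to $(0.15,2)$; for $\alpha=t(1,\tfrac12)$, $\alpha'=s(\tfrac12,1)$ with $s/t$ near $2.2$, the line $\overline{\alpha\alpha'}$ converges (as $t\to 0$) to a line through $a$ that crosses $B$, so by continuity $\overline{\alpha\alpha'}$ still meets $B$ for all small $t$. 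The point is that while $\alpha,\alpha'$ are near $a$, the chord $\overline{\alpha\alpha'}\cap P$ need not be: it can be long and sweep across $P$. A clean repair, replacing the dimension count, is to exploit that $a$ is a \emph{vertex} of $P$: pick a supporting line $\ell$ to $P$ at $a$ strictly between $\overline L$ and $\overline{L'}$, so $\ell\cap P=\{a\}$, and translate $\ell$ slightly into $P$. The translate $\ell'$ has $\ell'\cap P$ a short chord near $a$, meets each of $A,A'$ in a single point (since $A,A'$ leave $a$ transversally to $\ell$), and for $\ell'$ close enough to $\ell$ avoids every $B\in\mathcal F\setminus\{A,A'\}$ because those are bounded away from $a$. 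This yields the desired two-point line. The paper's own one-line justification (``yields $M\setminus\bigcup\mathcal F$ infinite'') glosses over exactly this issue; the supporting-line trick is what makes the intended argument rigorous in the $M=\bigcup\mathcal F$ setting.
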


Next, we utilize Lemma \ref{3-segments} to prove the following.

\begin{theorem}\label{coun-6-thm}
Let $\mathcal{F}$ be a family of $6$ line-segments with pairwise
disjoint relative interiors in $\mathbb{R}^2$.
If no line meets $\bigcup\mathcal{F}$
in exactly two points and
$\bigcup\mathcal{F}$ is $3$-connected as a graph,
then
it is of one of the four combinatorial types shown in
Figure \ref{coun-6} $(a)-(d)$.
\end{theorem}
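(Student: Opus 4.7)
The plan is to combine Lemma~\ref{3-segments} with a double-counting of endpoint-occurrences of the six segments, and then to enumerate admissible graph-theoretic/geometric structures for $G := \bigcup\mathcal{F}$.

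First, I would set $P := \mathrm{conv}\bigcup\mathcal{F}$. Since the graph $G$ is $3$-connected it has at least four vertices, hence $\bigcup\mathcal{F}$ is not collinear and $P$ is a convex polygon. By Lemma~\ref{3-segments}, every vertex of $P$ is an endpoint of at least three segments of $\mathcal{F}$; as the six segments contribute only $12$ endpoint-occurrences in total, $P$ has at most four vertices. If $P$ were a convex quadrilateral, equality would force all six segments to realise the six pairs among the four vertices of $P$, but then the two diagonals would meet in their relative interiors, contradicting the disjointness hypothesis. Hence $P$ is a triangle with vertices $a,b,c$, nine of the twelve endpoint-occurrences lie at $\{a,b,c\}$, and the remaining three are distributed among further vertices of $G$ lying on $\partial P\setminus\{a,b,c\}$ or in the interior of $P$.

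Next, I would parametrise configurations by two nonnegative integers at every additional vertex $v\notin\{a,b,c\}$: the multiplicity $m(v)$ of segments in $\mathcal{F}$ having $v$ as an endpoint, and the multiplicity $s(v)$ of segments whose relative interior passes through $v$. The constraint $\sum_{v\notin\{a,b,c\}}m(v)\le 3$ together with the $3$-connectedness bound $\deg_G(v)=m(v)+2s(v)\ge 3$, and the elementary fact that two distinct segments cannot share both endpoints, cut the list drastically. I would split into cases according to the partition of the three extra endpoint-occurrences---$(3)$, $(2,1)$ or $(1,1,1)$---and, within each case, according to whether the extra vertices lie in the interior of $P$ or on an open side of $P$ (the latter option implies $s(v)\ge 1$ for the segment realising that side).

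For each surviving case I would check planarity of the straight-line drawing, search for a $2$-cut (discarding the case if only $2$-connectedness holds), and confirm that the ``no line meets $\bigcup\mathcal{F}$ in exactly two points'' condition can be realised by a suitable choice of positions for the endpoints. Up to combinatorial equivalence, the admissible configurations should reduce to precisely the four types displayed in Figure~\ref{coun-6}(a)--(d). The main obstacle I anticipate is the case analysis itself: the term $2s(v)$ lets an extra vertex of small endpoint-multiplicity meet the degree bound through subdivisions, so one must carefully track which segment subdivides which and rule out the apparently $3$-connected but geometrically non-realisable or ordinary-line-forcing sub-cases.
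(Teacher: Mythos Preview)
Your overall strategy is sound and will lead to the result, but you are planning far more case analysis than is needed, because you have not extracted the full consequences of your own double count.

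Once you know $P$ is a triangle with vertices $a,b,c$ and each of them has at least three segment-endpoints, the same count gives exactly $9$ endpoints at $\{a,b,c\}$ (since $3+3+3=9\le 12$ forces equality at every vertex), hence exactly three ``extra'' endpoints. Now write $x,y,z$ for the numbers of segments with two, one, zero endpoints in $\{a,b,c\}$. Then $x+y+z=6$, $2x+y=9$, $y+2z=3$, so $z=0$, $x=3$, $y=3$. But the only three distinct segments with both endpoints in $\{a,b,c\}$ are the sides $ab,bc,ca$, so all three sides of $P$ lie in $\mathcal F$, and the remaining three segments $A,B,C$ emanate one from each of $a,b,c$. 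This is exactly the structural reduction the paper performs directly; once you have it, your partition cases $(3),(2,1),(1,1,1)$ on the non-triangle endpoints of $A,B,C$ become precisely the paper's short case split (common endpoint of all three; two share an endpoint; no two share an endpoint), and the $m(v),s(v)$ bookkeeping is unnecessary.

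Two further remarks. First, your diagonal-crossing argument for the quadrilateral case is a clean alternative to the paper's terser ``$\mathrm{card}\,\mathcal P\ge 4\Rightarrow \mathrm{card}\,\mathcal F\ge 7$''. Second, you should drop the step ``confirm that the no-ordinary-line condition can be realised'': the theorem only asserts that any $3$-connected, ordinary-line-free configuration falls into one of the four types; exhibiting realisations is handled separately by the figures and is not part of this proof.
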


\begin{proof}
Since $\bigcup\mathcal{F}$ is 3-connected,
it is not collinear.
Then $P=\mathrm{conv}\bigcup\mathcal{F}$ is a polygon (which is not a line-segment).
Let $\mathcal{P}$ be the set of sides of $P$.
By Lemma \ref{3-segments},
there are 3 line-segments from $\mathcal{F}$
with an endpoint at each vertex of $P$.
If $\mathrm{card}\mathcal{P}\geq 4$, then
$\mathrm{card}\mathcal{F}$ must be at least 7,
which contradicts the given condition
$\mathrm{card}\mathcal{F}=6$.
So $P$ is a triangle $abc$.
Since there are 3 line-segments from $\mathcal{F}$
with an endpoint at each vertex of $P$ and $\mathrm{card}\mathcal{F}=6$,
all sides of $P$ must be included in $\mathcal{F}$.
Denote the remaining line-segments in $\mathcal{F}$
by $A, B, C$, with endpoints at $a, b, c$, respectively.
Notice that, due to the fact that $\bigcup\mathcal{F}$ is 3-connected,
the degree of each endpoint of a line-segment from $\mathcal{F}$
is at least $3$.

If $A,B,C$ have a common endpoint, then $\bigcup\mathcal{F}$
is like in Figure \ref{coun-6} $(a)$.

If two of $A,B,$ and $C$, say $A$ and $B$, have a common endpoint $e$,
then $C$ must pass through $e$ to reach $ab$,
because the degree of each vertex of $\bigcup\mathcal{F}$
is at least $3$.
That is the case shown in Figure \ref{coun-6} $(b)$.

%\begin{figure}[htbp]
%\centering
%\includegraphics[width=9cm]{figures/iso-1.jpg}
%\caption{}%{Convex bodies including $\Gamma$-sets.}
%\label{iso-1}
%\end{figure}

If no two of $A,B$ and $C$ share an endpoint,
without loss of the generality,
assume that $B\cap \mathrm{int}A\neq \emptyset$.
Then there are two cases to be considered.
If $C\cap \mathrm{int}B\neq\emptyset$,
then we have the case depicted in Figure \ref{coun-6} $(c)$.
If $C\cap \mathrm{int}B=\emptyset$,
then $A\cap \mathrm{int}bc\neq\emptyset$ and
$C\cap \mathrm{int}A\neq \emptyset$.
Then $\bigcup \mathcal{F}$ is the graph
shown in Figure \ref{coun-6} $(d)$.
\end{proof}

Notice that the graphs in
Figure \ref{coun-6} $(c)$ and Figure \ref{coun-6} $(d)$
are isomorphic (with the 1-skeleton of a triangular prism).

\begin{figure}[htbp]
\centering
\includegraphics[width=8cm]{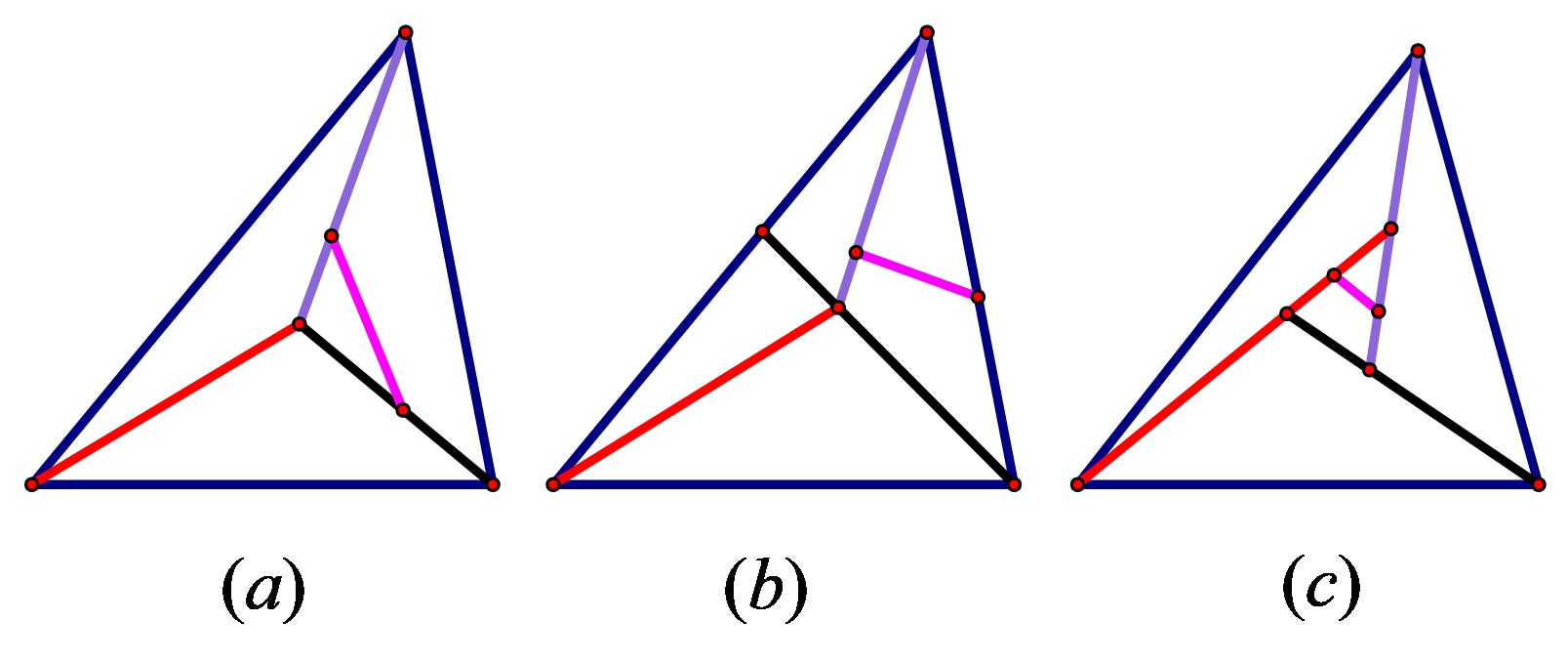}\\
\includegraphics[width=11cm]{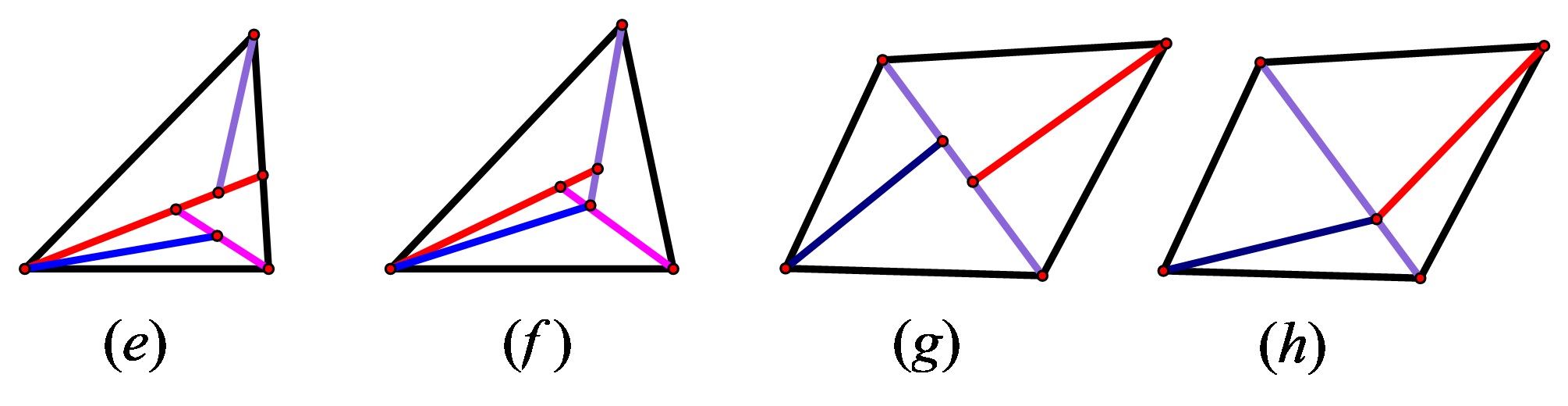}
\caption{$\mathrm{card}\mathcal{F}=7$ and $\bigcup\mathcal{F}$ is $3$-connected.}
\label{coun-7-1}
\end{figure}

For $\mathrm{card}\mathcal{F}=7$,
there are many combinatorial types of 3-connected graphs $\bigcup\mathcal{F}$.
For example, the graphs shown in Figure \ref{coun-7-1} $(a)-(c)$
can be obtained by adding
a line-segment in Figure \ref{coun-6}.
Other graphs are shown in Figure \ref{coun-7-1} $(e)-(h)$.

\section*{Acknowledgements}

This work is supported by NSF of China (12271139, 12201177); the Hebei Natural Science Foundation (A2024205012, A2023205045); the Special Project on Science and Technology Research and Development Platforms, Hebei Province (22567610H);
the Science and Technology Project of Hebei Education Department (BJK2023092); the Foreign Experts Program of People's Republic of China; the Program for Foreign Experts of Hebei Province. The first author was partially supported by NKFIH grant No. 133819 and also by the HUN-REN Research Network.

\end{document}